\newtheorem{thm}{Theorem}[section]
\newtheorem{lem}[thm]{Lemma}
\newtheorem{prop}[thm]{Proposition}
\theoremstyle{definition}
\newtheorem{defn}[thm]{Definition}
\theoremstyle{remark}
\newtheorem{ex}[thm]{Example}
\numberwithin{equation}{section}
\begin{document}

\title[Circuit bases for randomisation]{Circuit bases for randomisation}%

\author{Elena Pesce}%
\address{Swiss Re Institute}%
\email{elena\_pesce@swissre.com}%

\author{Fabio Rapallo}%
\address{Universit\`a di Genova}%
\email{fabio.rapallo@unige.it}%

\author{Eva Riccomagno}%
\address{Universit\`a di Genova}%
\email{riccomagno@dima.unige.it}%

\author{Henry P. Wynn}%
\address{London School of Economics}%
\email{h.wynn@lse.ac.uk}%


\begin{abstract}
After a rich history in medicine, randomisation control trials both simple and complex are in increasing use in other areas such as web-based AB testing and planning and design decisions. A main objective is to be able to measure parameters, and contrasts in particular, while guarding against biases from hidden confounders. After careful definitions of classical entities such as contrasts, an algebraic method based on circuits is introduced which gives a wide choice of randomisation schemes.

\medskip

{\it Keywords}: Algebraic Statistics and combinatorics; bias and confounders; big data; Design of Experiments

\medskip

{\it MSC2020}: 62K10; 68T09; 62R01

\end{abstract}

\maketitle



\section{Introduction}

There are ways in which a regression model can be biased because of the neglect of hidden variables, sometimes called hidden confounders. To some degree these biases can be removed using randomisation. A major source of conceptual difficulties is the continuing distinction between passive observation, characterised by the terms ``observational study'' and controlled experiment. In addition  this distinction is flavored by different intellectual traditions. In most fields a controlled experimental design is conceived as an {\em intervention}. Thus one talks about setting the level of a variable $X$, or applying a treatment or treatment combinations. A weaker version of intervention would be a form of selection. Rather than interfere too much with the state of nature one may simply select a value of $X$ which is  already in a population, such as selecting a subject of a particular age. Stratification is in this category as is ``matching'', observing (or treating) a collection of subject who are close in terms of some multivariate metric applied to the possible confounders. ``Natural Experiments'' exploit opportunities where Nature has unwittingly designed an experiment for us. For a very thorough compendium of experimental design methodology both as intervention and as selection, see \cite{dean}.

Traditions in agriculture and social sciences have stressed the role of randomisation, and indeed the method has been described as one of the the greatest contributions of statistics to scientific methodology; a major review is \cite{cox} which goes a long way towards updating earlier discussions such as \cite{kemp}. After a long period in which factorial and optimum controlled experiments may be seen to have had a dominant role, influenced by success in product design and quality improvement, randomisation is making a come back, if indeed it ever left the limelight. It is now used extensively outside its traditional areas of clinical trials under the generic term randomised control trials (RCT). Notably, there is a fast growing application to experiments in social media, under the heading AB experiment in on-line marketing, see \cite{koha}, and to socio-technical experiments, such as smart metering in homes and transport, see e.g.~\cite{guz}. Other important developments are in the field of ``big data'', where data is often collected without experimental design being used, so that biases can be a serious impediment to model building, see~\cite{drov,pesce1,pesce2}.

There seems to be no doubt that in nearly all fields the removal of biases in modelling is a major reason to randomise. The question then remains as to whether the randomisation, or rather the randomisation distribution, is to be used in the analysis, e.g., probability statements are made based on the randomisation, or whether randomisation should only be used in the design, e.g. for bias reduction. The latter approach is probably more common and is adopted here. A compromise position is a minimax approach which is closely related to the  use of randomisation in finite population sampling, see~\cite{scott, steng, stig, wynn}.

There is a subtle relationship between randomisation and combinatorial design, which is perhaps closest to the present paper, see~\cite{bailey}. Our purpose is to introduce a specialised but also very general technique,
namely the theory of circuits, already studied in numerical analysis and algebraic statistics. This sets the paper in the sub-areas of ``complex randomisation'', ``structured randomisation'', or ``randomised blocks''.

\section{AB experiments} \label{sect:AB}

Some of the disparate interpretations of randomisation can be understood from a simple AB (RCT) experiment. Using traditional terminology, we want to assess the difference between the effect of two treatments $A$ and $B$ with  effects $\theta_1$ and $\theta_2$, respectively. That is we want to estimate $\phi  = \theta_1 - \theta_2$.

A standard model is to write  for subjects $i$ and $j$ receiving treatments $A$ and $B$, respectively
\begin{eqnarray*}
Y_{1i}  & = & \theta_1 + \delta_{1i},\; i = 1, \ldots, n_1,\\
Y_{2j}  & = & \theta_2 + \delta_{2j},\; j = 1, \ldots, n_2
\end{eqnarray*}
where $n_1,n_2$ are the respective sample sizes and $\delta_{1i}, \delta_{2j}$ are unit effects of other influences, be the errors of measurement or other (hidden) factors.
The naive estimate of the treatment difference is
$$\hat{\phi} =  \hat{\theta}_1 - \hat{\theta}_2 \, . $$
Here the estimates of $\theta_1$ and $\theta_2$ are given by the respective sample means:
$$  \hat{\theta}_1  = \bar{Y}_{1 \cdot} \, ,\;\hat{\theta}_2  = \bar{Y}_{2 \cdot} \, ,$$
where for instance $\bar{Y}_{1\cdot}$ is the usual notation for the average of measurements over group $A$. The standard argument, and this is probably also the common sense argument of non-experts, is that if we randomise  then the  difference between the mean values of the deviations due to other factors will cancel out: $\delta_{1 \cdot} - \delta_{2 \cdot}$, will be approximately zero and will not perturb $\hat{\phi}$. Of course, if $\delta_{1i}, \delta_{2j}$ are random with standard assumptions then $\hat{\phi}$ is both the Least Squares estimate and the best linear unbiased estimate of $\phi$.

A critical question is: what does the model mean, both scientifically and predictively?  What are $\theta_1,\theta_2$ and $\phi$? More precisely, do parameter values refer to the finite population from which the sample was taken or  to which the treatment were applied? Or is there some larger population of which the population of units under study is a subpopulation, such as all present and future subjects who may benefit from a vaccination decision base on the results of the experiment? Or even more metaphysically,  are they part of a larger scientific theory, maybe even a ``crucial experiment'', to decide between two scientific theories?

These questions are important, for example with AB experiments on people using social media. The commercial opportunities in terms of the use of huge (big) data sets come with a risk of bias arising from any number of demographic and operations factors. It is almost impossible to describe the population of social media users but if bias can be removed in some simple way then the estimates can genuinely reflect peoples' choices and behaviour. A naive but rather universal conclusion is something like: after randomisation we can use the model, which is sometimes expressed in a more expert fashion as: make sure you randomise your blocks. On the one hand this paper takes this simple approach, but on the other introduces a special technique, based on circuits, to  decompose an experiment into mutually exclusive  blocks in each of which randomisation can be carried out separately. Some solutions comprise recognisable combinatorial objects, others derive from running the programme {\tt 4ti2}~\cite{4ti2} to obtain the circuits, see subsection \ref{subs:comp}.

After a rather elementary formulation of the problem in the next two sections, we formally define the quest for what we will call {\em valid} randomisation schemes in Section \ref{sect:valid}, followed by a short discussion on analysis in Section \ref{sect:analysis}. Sections \ref{sect:circuits} and \ref{sect:tum} are the main developments, with Section \ref{sect:circuits} describing a sufficient condition that non-negative binary circuit (to be defined) gives a valid randomisation and Section \ref{sect:tum} some special conditions.

\section{Contrasts} \label{sect:contrat}

As we have stated in the AB case, randomisation is particularly suited to situations in which standard estimates are unaffected by a uniform shift of the observations, which is then subtracted out. Consider an experiment giving a random sample $Y_1, \ldots, Y_n$. We have the following:
\begin{defn}
A linear function $Z = \sum_{i=1}^n c_i Y_i$ with fixed coefficients $\{c_i\}$ and data values $\{Y_i\}$ is called an {\em empirical contrast} if $\sum_{i=1}^n c_i = 0$.
\end{defn}

Now consider a standard  regression model in the form
$$Y(x) = \sum_{j=1}^{p} \theta_j f_j(x)  + \epsilon,$$
for functions $\{f_j(x)\}$, $x$ a generic point in some design space $\mathcal X$ and $\epsilon$ a random error with the usual assumptions (zero mean and constant variance).

An experimental design $D = \{x^{(i)}, \; i = 1, \ldots, n\}$, with sample size $|D| =  n$, has design matrix
$$X = \{f_j(x^{(i)}) \},$$
and we express the standard regression set-up, under standard assumptions  by:
$$\mu = \mathbb{E}(Y) = X \theta,$$
where $\mathbb{E}$ is the expectation and $\theta$ the parameter vector.
\begin{defn}
For a standard regression model a \emph{parametric contrast} is defined as the expectation of an empirical contrast.
\end{defn}

To repeat, the basic idea is to divide experiment into disjoint blocks in each of which we randomise, and then combine the results.

\begin{ex}[$2^2$ experiment] We consider a simple example from linear regression, namely a $2^2$ factorial design problem, with $\pm 1$ levels and no replication (for simplicity). We take the model without an interaction
\[
\mathbb{E}(Y)= \theta_0 + \theta_1x_1 + \theta_2x_2 \, ,
\]
so that design matrix is
\[
X = \left[
\begin{array}{rrr}
1 & 1 & 1 \\
 1 & -1 & -1 \\
 1 & 1 & -1 \\
 1 & -1 &  1
\end{array}
\right].
\]
If we randomise a large population and uniformly apply the four combination of the design, $\{ \pm 1, \pm 1 \}$, the potential bias effect will be negligibly small because the estimators of the $\theta$-parameters are unbiased.

But there is an alternative. Split the population into two groups randomise each separately and apply the controls $(x_1,x_2) = \{(1,1),(-1,-1)\}$ to the first group and $\{(1, -1),(-1,1)\}$ to the second group. Then we can estimate $\theta_1 + \theta_2$ from the first group and $\theta_1 - \theta_2$ from the second group. Combining these estimates gives  the same result, except possible the small effect or confounders,  as if we randomised over the whole $2^2$ experiment. Note that the parameters $\theta_1$ and $\theta_2$ and their estimates are already respectively parametric contrasts and empirical contrasts. This can be seen as splitting the $2^3$ experiment into two (randomised) AB experiments.
\end{ex}

\section{Writing a model in contrast form} \label{sect:writing}

In the case of the orthogonal design described above the $X$-matrix takes the form
$$X = [j : X_1],$$
where $j$ is the $n$-vectors of ones, for the constant (intercept) term, and $X_1$ is orthogonal to $j$: $j^T X_1 = 0$. We describe such an $X$ matrix as being {\em in contrast form}. All empirical and parametric contrasts are derived from $X_1$. Thus we can prove the following lemma.
\begin{lem}
For a regression model with $\mu = \mathbb{E}(Y) = \mathbb{E}(\tilde{X}\theta)$, written  in contrast form $\tilde{X} = [j : X_1]$ the set of all parametric contrasts is  $\{c^T \mu: c^T =  c^T X_1 \ \mbox{and} \ j^Tc=0\}$.
\end{lem}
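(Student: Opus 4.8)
The plan is to peel back the two definitions in turn — first \emph{empirical contrast}, then \emph{parametric contrast} — and substitute the contrast-form decomposition $\mu = \tilde{X}\theta = j\theta_0 + X_1\beta$, where $\theta_0$ is the intercept coordinate and $\beta$ collects the remaining parameters. Since the whole purpose of contrast form is the orthogonality $j^T X_1 = 0$, I expect the intercept to drop out of every contrast, leaving a quantity that lives entirely in the column space of $X_1$, which is precisely what the displayed conditions on $c$ are meant to encode.

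First I would take an arbitrary parametric contrast. By definition it is $\mathbb{E}(c^T Y) = c^T\mu$ for some coefficient vector $c$ satisfying the empirical-contrast condition $j^T c = 0$. Writing $c^T\mu = \theta_0\, c^T j + \beta^T X_1^T c$ and invoking $c^T j = 0$ gives $c^T\mu = \beta^T X_1^T c$, so $\theta_0$ never contributes. Next I would replace $c$ by its orthogonal projection onto $\operatorname{col}(X_1)$: decomposing $c = c_1 + c_\perp$ with $c_1 \in \operatorname{col}(X_1)$ and $c_\perp \perp \operatorname{col}(\tilde{X})$, the component $c_\perp$ annihilates $\mu$, so the value $c^T\mu$ is unchanged while the representative now satisfies $c \in \operatorname{col}(X_1)$ — the content of the condition written $c^T = c^T X_1$ in the statement — and automatically $j^T c = 0$, since $j \perp \operatorname{col}(X_1)$. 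This places every parametric contrast inside the claimed set.

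For the reverse inclusion I would read the definitions forwards, starting from any $c$ obeying the two stated conditions: $j^T c = 0$ makes $c^T Y$ a bona fide empirical contrast, and its expectation is exactly $c^T\mu$, hence a parametric contrast. Combining the two inclusions yields the asserted equality of sets.

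The main obstacle is the middle step — showing that passing to the projected representative costs nothing. Concretely I must verify that $c^T\mu$ depends only on the projection of $c$ onto $\operatorname{col}(\tilde{X})$, and then that within $\operatorname{col}(\tilde{X})$ the intercept direction $j$ is irrelevant for contrasts, so the effective representative can always be taken in $\operatorname{col}(X_1)$. Here the orthogonal splitting $\operatorname{col}(\tilde{X}) = \operatorname{span}(j) \oplus \operatorname{col}(X_1)$ guaranteed by contrast form does the real work; once it is in place the two displayed conditions on $c$ are seen to be consistent (indeed $c \in \operatorname{col}(X_1)$ already forces $j^T c = 0$), and the characterisation follows.
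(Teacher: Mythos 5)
Your proposal is correct and rests on the same core computation as the paper's one-line proof: writing $c^T\mu = c^T j\,\theta_0 + c^T X_1\beta$ and using $j^Tc=0$ to kill the intercept term. The extra work you do — projecting $c$ onto $\operatorname{col}(X_1)$ to make sense of the condition written $c^T = c^T X_1$, and spelling out the reverse inclusion — is a reasonable elaboration of what the paper leaves implicit, but it is not a different route.
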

\begin{proof}
This follow  since $\mathbb{E}(c^TY) = c^T[j:X_1]\theta = c^Tj\theta + c^T X_1 \theta = c^T X_1 \theta$.
\end{proof}

Notice that from any model with integer design matrix $X$ it is always possible to derive a reparametrisation with design matrix $\tilde X$ written in contrast form.

\begin{lem}
A design matrix $X$ with column space containing the vector $j = (1,1, \ldots, 1)^T$ can be transformed to contrast form $\tilde{X}=[ j : X_1]$ with the same column space as $X$, where $j^T X_1 =0$.
\end{lem}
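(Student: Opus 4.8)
The plan is to realise the passage to contrast form as an explicit column operation governed by the orthogonal projection onto the orthogonal complement of $j$. Write $n$ for the number of rows of $X$ and set $P = I_n - \frac{1}{n} j j^T$, the usual centering matrix (well defined since $j^T j = n > 0$). The first thing I would check is that $P$ is the orthogonal projector onto $j^\perp$: it is symmetric and idempotent, and $Pj = j - \frac{1}{n} j (j^T j) = j - j = 0$, so that $j^T P = (Pj)^T = 0$.

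Next I would define $X_1 = PX$ and $\tilde X = [\,j : X_1\,]$. The orthogonality condition is then immediate, since $j^T X_1 = (j^T P) X = 0$, which is exactly the form $\tilde X = [\,j : X_1\,]$ with $j^T X_1 = 0$ required by the definition of contrast form.

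The substance of the argument is the equality of column spaces, and this is where the hypothesis $j \in \mathrm{col}(X)$ does the real work. Expanding, $X_1 = X - \frac{1}{n} j (j^T X)$, so every column of $X_1$ is a column of $X$ minus a scalar multiple of $j$; since $j \in \mathrm{col}(X)$, each such column lies in $\mathrm{col}(X)$, and together with $j \in \mathrm{col}(X)$ this yields $\mathrm{col}(\tilde X) \subseteq \mathrm{col}(X)$. Conversely, each column of $X$ equals the corresponding column of $X_1$ plus a multiple of $j$, and both summands lie in $\mathrm{col}(\tilde X)$, so $\mathrm{col}(X) \subseteq \mathrm{col}(\tilde X)$. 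Hence the two column spaces coincide.

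I expect the only delicate point to be the first inclusion: the centering step introduces multiples of $j$, and it is precisely the assumption $j \in \mathrm{col}(X)$ that keeps the modified columns inside $\mathrm{col}(X)$ — without it one would merely obtain $\mathrm{col}(\tilde X) \subseteq \mathrm{col}(X) + \langle j \rangle$. Finally, I would remark that if $X$ has integer entries one may instead take $X_1 = nPX = nX - j(j^T X)$, which is integer-valued, still satisfies $j^T X_1 = 0$, and has the same column space (scaling columns by $n$ is harmless), thereby matching the reparametrisation claim made just before the lemma.
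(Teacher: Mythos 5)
Your proof is correct, and it takes a genuinely different --- and in fact more complete --- route than the paper. The paper's own proof does not construct $X_1$ at all: it simply assumes a contrast-form $\tilde X$ is available and records the induced reparametrisation $\phi = (\tilde X^T\tilde X)^{-1}\tilde X^T X\theta$ from the identity $\tilde X\phi = X\theta$ (implicitly assuming $\tilde X$ has full column rank so that the normal-equations inverse exists). You instead exhibit $X_1$ explicitly as $PX$ with $P = I_n - \tfrac{1}{n}jj^T$ the centering projector, verify the orthogonality $j^TX_1=0$ directly, and prove the two column-space inclusions, correctly isolating the one place where the hypothesis $j\in\mathrm{col}(X)$ is indispensable. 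What the paper's approach buys is the statistically relevant content --- the explicit map from the old parameters $\theta$ to the new parameters $\phi$, which is what is used in the $2\times 2$ ANOVA example that follows; what your approach buys is an actual existence proof of the contrast form, which the paper's argument presupposes rather than establishes. Your closing remark about replacing $PX$ by $nX - j(j^TX)$ to preserve integrality is a sensible complement to the paper's observation that integer design matrices admit such a reparametrisation, though note the paper's displayed examples (e.g.\ the $2\times 2$ table) use a $\tilde X$ with small integer entries obtained by hand rather than by the scaled projection, so the two constructions need not produce the same matrix --- only the same column space.
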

\begin{proof}
We can easily determine the reparametrisation which the transformation requires. Starting with:
$$ \tilde{X} \phi = X \theta,$$
we solve for $\phi$:
$$ \phi = (\tilde{X}^T \tilde{X})^{-1} \tilde{X}^T X \theta.$$
\end{proof}

The term contrast is especially prevalent in Analysis of Variance (ANOVA) models, that is models for qualitative factors in which each level of each factor provides a parameter for an additive model. The classical notation, say, for a two-way $I \times J$ table with two factors  is that the additive model would have parameters $\alpha_i, (i=1, \ldots, I)$  and  $\beta_j, (j=1, \ldots, J)$ and the model for the observations $Y_{ij}$ is
$$Y_{ij} = \alpha_i + \beta_j + \epsilon_{ij},$$
where $\{\epsilon_{ij}\}$ are the random errors with standard assumptions. We show this with an example.

\begin{ex}
Let $I=J=2$. By using indicator variables and setting $\theta = (\alpha_1, \alpha_1,\beta_1,\beta_2)^T$  we write the model in regression form, $\mathbb{E}(Y) = X \theta$ where
$$
X = \left[
\begin{array}{rrrr}
1 & 0 & 1 & 0 \\
1 & 0 &  0& 1 \\
0 & 1 & 1 & 0 \\
0 & 1 & 0 & 1 \\
 \end{array}
\right].
$$
This $X$-matrix is {\em not} in contrast form, but it can be transform to one that is:
$$
\tilde{X} = \left[
\begin{array}{rrr}
1 & 1 & 1 \\
1 & 1 &  -1 \\
1 & -1 & 1 \\
1 & - 1 & -1
 \end{array}
\right].
$$
From this, the reparametrisation is:
\begin{eqnarray*}
\phi_0 &  = & \frac{1}{2}( \alpha_1 + \alpha_2 + \beta_1 + \beta_2)\, ,\\
\phi_1 & = & \frac{1}{2}(\alpha_1 - \alpha_2) \, , \\
\phi_2 & = & \frac{1}{2}(\beta_1 - \beta_2) \, .
\end{eqnarray*}
\end{ex}

Note that we have limited the analysis here to the decomposition of $\tilde{X}$ into $[j, X_1]$ since for randomisation we are interested in the decomposition of the vector $j$, but the results in this section and many results about the circuit bases in the next sections could be written in general for a decomposition of $\tilde{X}$ into $[X_2,X_1]$ with $X_2^TX_1=0$.

\section{Valid randomisations} \label{sect:valid}

Using the representation of the design matrix in contrast form we can introduce and analyze the randomisation systems in order to give general answers to the questions stated earlier in Sect.~\ref{sect:AB} in the framework
of AB experiments. The separation into blocks is described by the following definitions.

\begin{defn}
For observations $Y_i,\;(i = 1, \ldots, n)$ a \emph{potential randomisation system} $R$ is a set partition of $\mathcal N = \{1,2, \ldots, n\}$,
namely
a decomposition of $\mathcal N$ into disjoint exhaustive subsets, $R_1, \ldots, R_k$, called blocks, of size 2 or more:
\begin{enumerate}
\item $\cup_{1=1}^k R_i = \mathcal N$
\item $R_i \cap R_j = \emptyset, \, 1 \leq i < j \leq k$
\item $|R_i| \geq 2, i = 1, \ldots, k$
\end{enumerate}
\end{defn}
\begin{defn} For a regression model and experimental design $D_n$ with sample size $n$ and a design matrix in contrast form $[j: X_1]$, a \emph{valid randomisation system} is a potential randomisation system for which all
the associated binary vectors $z^{(i)}= (z_{i,1}, \ldots, z_{i,n})$
$$ z_{i,j} = \left\{
\begin{array}{l}
1,\; i \in R_j \\
0, \; i \in \mathcal N  \setminus  R_j
\end{array}
\right.,
$$
are orthogonal to $X_1$: $ (z^{(i)})^T X_1 = 0$, $i = 1, \ldots, n$.
\end{defn}

The next two examples are familiar in the sense that the orthogonal blocks are easily associated with addition factors or parameters in an orthogonal design. The third example may be less familiar.

\subsection{Factorial fractions}
We consider a $2^3$ factorial experiment for main  effects. The standard $X$-matrix is already in contrast form:
$$X^T = \left[
\begin{array}{rrrrrrrr}
1 & 1 & 1 & 1 & 1 & 1 & 1 & 1 \\
1 & 1 & 1 & 1 & -1 & -1 & -1 & -1 \\
1 & 1 & -1 & -1 & 1 & 1 & -1 & -1\\
1 & -1 & 1 & -1 & 1 &-1 & 1 & -1 \\
 \end{array}
\right]
$$
In addition to a full randomisations there are two different randomisation systems and we list the $R_j$ partitions for each.
\begin{enumerate}
\item $\{1,4,6,7\}, \{2,3,5,8\}\, ;$
\item  $\{1,8\},\{2,7\},\{3,6\}, \{4,5\}\, .$
\end{enumerate}
These two distinct randomisations of this example correspond to familiar decomposition into blocks based on abelian groups (see eg \cite{box}). The first arrives from a  $2^{3-1}$ experiment with defining contrast subgroup in classical notation
$$I = ABC.$$
The second corresponds to the $2^{3-2}$ with subgroup
$$I = AB=BC = AC.$$
For those more familiar with the algebraic design of experiments, these solutions are the point ideal corresponding respectively to the solutions of
$$(1): x_1x_2x_3 = \pm 1,\;\;\mbox{and}\;\;(2): (x_1x_2, x_2 x_3) = (\pm 1, \pm 1).$$

\subsection{Tables and Latin Squares}
Consider an $I \times I$ table with the usual additive model. A Latin square based on the table has the usual definition. If $I=3$ there are two mutually orthogonal Latin squares; in traditional notation:
$$
\begin{array}{ccc}
A & B & C \\
C & A & B \\
B & C & A
 \end{array}\;\;\;\;\;\;
\begin{array}{ccc}
a & b & c \\
b & c  & a \\
 c & a  & b
 \end{array}
$$
Each square gives a different valid randomisation based on the letters. Labelling the observations left-to-right and top-to-bottom the respective blocks are (ignoring commas)
$$\{159, 267, 348\},\;\;\;\;\;\; \{168,249,357\}.$$
We state the general result without proof and in the terminology of this example.
\begin{lem}
For an $I \times I$ additive Analysis of Variance model a set of mutually orthogonal Latin squares provide a set of alternative valid
randomisations.
\end{lem}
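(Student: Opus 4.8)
The plan is to work with the cells $(i,j)$ of the $I\times I$ table as the index set $\mathcal N$ (so $n=I^2$) and to describe the column space of $X_1$ explicitly. For the additive model the fitted values are $\alpha_i+\beta_j$, so the column space of $[j:X_1]$ is the set of all arrays $v_{(i,j)}=\alpha_i+\beta_j$. Decomposing each such array into its grand mean together with a part depending only on $i$ and a part depending only on $j$ shows that the column space of $X_1$ equals $\mathcal R\oplus\mathcal C$, where $\mathcal R$ consists of the \emph{row contrasts} (arrays constant along each row whose $I$ row values sum to zero) and $\mathcal C$ consists of the \emph{column contrasts}, defined symmetrically.

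First I would prove the characterisation that an array $v$ satisfies $v^TX_1=0$ precisely when all its row sums are equal and all its column sums are equal. A generic row contrast has the form $r_{(i,j)}=\rho_i$ with $\sum_i\rho_i=0$, and $v^Tr=\sum_i\rho_i\big(\sum_j v_{(i,j)}\big)$. If the row sums $\sum_j v_{(i,j)}$ are all equal to a common value $R$, this becomes $R\sum_i\rho_i=0$, so $v$ is orthogonal to every element of $\mathcal R$; the symmetric argument gives orthogonality to $\mathcal C$, and hence to $X_1$. Conversely, letting $\rho$ range over all zero-sum vectors forces the row sums to be constant, and likewise for columns, so the two conditions are equivalent (the dimension count $\dim X_1^\perp=I^2-(2I-2)$ confirms there is nothing further to check). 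I expect this characterisation to be the one genuinely substantive step; the remainder is verification.

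Next I would read off the conclusion for a single Latin square. Its blocks are the letter-classes, and by the defining property of a Latin square each of the $I$ letters occurs exactly once in every row and exactly once in every column. Hence the binary indicator $z$ of any letter-class has exactly one entry equal to $1$ in each row and in each column, so every row sum and every column sum of $z$ equals $1$; by the characterisation $z^TX_1=0$. The letter-classes partition $\mathcal N$ into $I$ blocks each of size $I\ge 2$, so they meet the three requirements of a potential randomisation system and, being orthogonal to $X_1$, constitute a valid randomisation. Applying this to each square of the set produces one valid randomisation per square.

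Finally, to see that mutually orthogonal squares give genuinely \emph{alternative} randomisations I would check that orthogonal squares induce distinct partitions. If two orthogonal squares gave the same partition into letter-classes, the letters of the second would be a fixed relabelling of those of the first, so one ordered pair of letters would recur $I$ times upon superimposition, contradicting orthogonality once $I\ge 2$. Thus a set of $m$ mutually orthogonal Latin squares yields $m$ distinct valid randomisations, as claimed. The only point requiring care is that the characterisation be stated against the reparametrised contrast form $[j:X_1]$ supplied by the earlier lemma rather than the raw ANOVA indicator matrix: the two share a column space, but the orthogonality condition is cleanest expressed relative to $X_1$.
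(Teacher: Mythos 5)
Your argument is correct and complete. Note that the paper itself states this lemma explicitly without proof, so there is no in-paper argument to compare against; your write-up supplies exactly the missing verification. The substantive step is the one you identify: orthogonality of a binary block indicator $z$ to (the column space of) $X_1$ for the additive two-way model is equivalent to $z$ having constant row sums and constant column sums, and the defining property of a Latin square letter-class (one cell per row, one cell per column) delivers this immediately, with block size $I\ge 2$ taking care of the potential-randomisation requirements. Your closing observation that mutual orthogonality is needed only to guarantee the partitions are genuinely distinct (a single Latin square already gives a valid randomisation on its own) is a worthwhile clarification of what the hypothesis is actually doing in the statement.
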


\subsection{$k$-out-of-$2k$ choice experiments} \label{k:2k}

Choice experiments are those in which subjects are  asked to score a  selection of attributes from a portfolio of attributes. Models are fitted to experimental data in an effort to discover subjects' (hidden) preference order.

Suppose there are $n=4$ attributes and each subject is offered $k=2$ attributes, labelled $1,2,3,4$. There are six  selection pairs
$$\{1,2\}, \{1,3\}, \{1,4\}, \{2,3\},\{2,4\}, \{3,4\}.$$
An additive preference model has (without replication) the six values $Y_{i,j}$ 
with the model
$$Y_{ij} = \alpha_i + \alpha_j   + \epsilon_{i,j} \quad (i,j = 1,2,3,4 ; i <  j) $$
We are interested in contrast $\alpha_i - \alpha_j$, because their estimates would yield an estimated preference order. In this case:
$$
X = \left[
\begin{array}{rrrr}
1 & 1 & 0 & 0 \\
1 & 0 & 1 & 0 \\
1 & 0 & 0 & 1 \\
0 & 1 & 1 & 0 \\
0 & 1 & 0 & 1 \\
0 & 0 & 1 & 1 \\
 \end{array}
\right].
$$
This gives a choice of $X_1$:
$$
X_1^T = \left[
\begin{array}{rrrrrr}
-1 & 0 & 0 & 0 &0 & 1 \\
0 & -1 & 0 & 0 &1 & 0 \\
0 & 0 & -1 & 1 &0 & 0 \\
\end{array}
\right],
$$
and the randomisation $\{16,25,34\}$.

\section{Analysis} \label{sect:analysis}

The informal approaches we have taken is that, for large samples randomisation has approximately the effect of introducing a block parameter. Our condition of orthogonality in the definition of valid randomisation and as exemplified, has so far ignored the fact that in standard terminology blocks do not have to be orthogonal. Indeed, there is rich theory of balanced incomplete blocks (BIBD) both from combinatorial and from optimal design theory. We note here some basic facts about orthogonal versus non-orthogonal blocks.

\begin{enumerate}
\item For orthogonal designs  we set up a model in which every $j$-vectors is allocated a block parameter, then only under orthogonality is the usual LSE of the $\theta$-parameters the best and there is no bias of these    estimates from the block effects.
\item In the non-orthogonal blocks design  case, if we use the LSE of the $\theta$-parameters assuming that the block parameters are zero, when they are not, then the block parameters introduce bias.
\item In the non-orthogonal blocks case the ``proper'' LSE estimate of the  $\theta$-parameters in the presence of the block parameters, will be unbiased but will have higher variances than in case (2) above (the covariance matrix will Loewner-dominate).

\end{enumerate}

Models with non-orthogonal blocks with a specified block effect, require some effort to model or at least interpret the block affect, for example the effect of day if the experiment is conducted over days.  In such cases a bias model is required. But where bias is caused by hidden, unspecified, confounders, such a bias model seems somewhat artificial. The effects are too artificial to model but  sufficiently present that we prefer orthogonality.

\section{Circuit basis for randomisation} \label{sect:circuits}

In this section, we introduce the circuits of a matrix to analyse the problem of randomisation. We consider the randomisation as the decomposition of the vector $j=(1,\ldots,1)^T$ into binary vectors:
\[
j = j_1 + \ldots + j_k
\]
where each vector $j_h$ is a binary vector satisfying $j_h^TX_1 = 0$, $h=1, \ldots , k$. Such binary vectors $j_h$ are called binary randomisation vectors. Next, we introduce the circuits and their main properties.

Let $A$ be an integer-valued matrix with $d$ rows and $n$ columns. For our purposes, we can assume that $A=X_1^T$. Let $u \in \mathbb{Z}^n$ be an integer-valued vector, $u^+$ be the positive part of $u$, namely $u^+_i = \max(u_i, 0)$, $i = 1, \ldots, n$, and $u^-$ be the negative part of $u$, namely $u^-_i = - \min(u_i, 0)$, $i = 1, \ldots, n$, so that $u = u^+ - u^-$. Moreover, denote with $\mathrm{supp}(u)$ the support of $u$, i.e.,
\[
\mathrm{supp}(u) = \{ i \in \{1, \ldots, n\} \ : \ u_i \ne 0 \} \, .
\]
\begin{defn}
A {\it circuit} of $A$ is an integer-valued vector $u$ in $\ker(A)$ with the following minimality properties:
\begin{enumerate}
\item the binomial $x^{u^+}-x^{u^-}$ is irreducible in the polynomial ring ${\mathbb Q}[x_1, \ldots, x_n]$, where $\mathbb{Q}$ is the set of rational numbers;

\item $u$ has minimal support, i.e., there is no other circuit $v$ with $\mathrm{supp}(v)\subset \mathrm{supp}(u)$.
\end{enumerate}
\end{defn}

\begin{defn}
The set of all circuits of the matrix $A$ is named as the {\it circuit basis} of $A$ and it is denoted with ${\mathcal C}(A)$.
\end{defn}
The circuit basis ${\mathcal C}(A)$ is always finite. The minimal support property gievs rise to a number of interesting properties of ${\mathcal C}(A)$. We recap in the following proposition the special features of the circuits we will use for describing randomisation. For the proofs and further details the reader can refer to~\cite{sturmfels}.

\begin{prop} \label{prop:circ}
Let $A$ be an integer-valued matrix with dimensions $d \times n$ and suppose that $\mathrm{rank}(A)=d$.
\begin{enumerate}
\item The circuit basis ${\mathcal C}(A)$ is subset compatible, i.e., if we consider a matrix $A'$ by selecting $n'<n$ columns, then the     circuit basis of $A'$ is formed by the circuits in ${\mathcal C}(A)$ with support contained in the $n'$ columns.

\item A circuit $u$ in ${\mathcal C}(A)$ has cardinality of the support at most $d+1$.

\item Each vector $v$ of $\ker(A)$ can be written as rational non-negative linear combination of circuits, i.e,
\[
v = \sum_{h=1}^{n-d} q_h u_h \, , \, q_h \in \mathbb{Q}_+
\]
and the $u_h$ are conformal with $v$.
\end{enumerate}
\end{prop}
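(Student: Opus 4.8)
The plan is to reduce all three statements to the combinatorial geometry of the column configuration of $A$. First I would record the standard reformulation of the two minimality conditions in the definition: since $u^+$ and $u^-$ have disjoint supports, the binomial $x^{u^+}-x^{u^-}$ is irreducible exactly when the integer components of $u$ are coprime, so condition (1) says that $u$ is \emph{primitive}, while condition (2) says that $\mathrm{supp}(u)$ is minimal among the supports of nonzero kernel vectors. Thus a circuit is a primitive kernel vector whose support is a minimal dependent set of columns of $A$; equivalently, $\mathrm{supp}(u)$ is a circuit of the rational vector matroid $M(A)$ on the $n$ columns, and the sign pattern of $u$ is the corresponding oriented circuit. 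All three claims then follow from matroid facts together with one convexity argument.

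For part (1), let $A'$ be obtained by keeping a column set $C'$ with $|C'|=n'$. Appending or deleting zeros gives a bijection between the $\ker(A)$-vectors supported in $C'$ and the whole of $\ker(A')$, and this bijection preserves both primitivity and supports. If $u\in\mathcal{C}(A)$ has $\mathrm{supp}(u)\subseteq C'$, its restriction lies in $\ker(A')$ and stays support-minimal there, since any $\ker(A')$-vector of strictly smaller support would extend by zeros to a $\ker(A)$-vector contradicting minimality of $u$; hence it is a circuit of $A'$. Conversely a circuit of $A'$ extends to a kernel vector of $A$ whose support, lying in $C'$, is minimal by the same extension argument, so it is a circuit of $A$, giving the asserted exact correspondence. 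Part (2) is then the matroid circuit bound: since $\mathrm{rank}(A)=d$, every independent set of columns has at most $d$ elements, and the support of a circuit is a minimal dependent set, so each of its proper subsets is independent and hence of size at most $d$; consequently $|\mathrm{supp}(u)|\le d+1$.

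For part (3), fix $v\in\ker(A)$ and consider the polyhedral cone $C_v$ of all $w\in\ker(A)$ conformal with $v$, i.e.\ $w_i\ge 0$ where $v_i>0$, $w_i\le 0$ where $v_i<0$, and $w_i=0$ where $v_i=0$. This cone is pointed, since it lies in a coordinate orthant after sign flips, and $v\in C_v$. I would identify its extreme rays with conformal circuits: an extreme ray is a support-minimal nonzero $w\in C_v$, and its support is a minimal dependent set, for if some matroid circuit $c$ had support strictly inside $\mathrm{supp}(w)$, the associated kernel vector could be added to $w$ with a suitable sign and a maximal step length $\lambda$, staying inside $C_v$ (its support lies in $\mathrm{supp}(w)$, where $w$ meets the sign constraints strictly) until a coordinate vanishes, producing a nonzero element of $C_v$ of smaller support and contradicting extremality. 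Hence each extreme ray is a scalar multiple of a circuit conformal with $v$. Applying Carath\'eodory's theorem for conical combinations inside $C_v$, whose dimension is at most $\dim\ker(A)=n-d$, writes $v$ as a non-negative rational combination of at most $n-d$ extreme rays, i.e.\ of circuits $u_h$ conformal with $v$, which is precisely the claimed decomposition.

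The step I expect to be the main obstacle is the extreme-ray identification in part (3): connecting the purely polyhedral notion of a support-minimal element of $C_v$ to the algebraic circuit, whose support must be minimal among \emph{all} kernel vectors and not merely the conformal ones. The sign-compatible reduction sketched above is what bridges the two notions; once it is in place, the remaining claims are routine applications of matroid rank and of the restriction correspondence.
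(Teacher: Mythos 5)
Your argument is correct, but note that the paper does not actually prove Proposition \ref{prop:circ}: it states the three properties and defers entirely to the reference \cite{sturmfels}, so there is no in-paper proof to compare against. What you have written is a self-contained reconstruction of the standard arguments from that literature. Your translation of the two minimality conditions into ``primitive kernel vector whose support is a matroid circuit of the column configuration'' is the right starting point; part (1) then follows from the zero-padding bijection between $\ker(A')$ and the kernel vectors of $A$ supported in the retained columns, and part (2) from the fact that every proper subset of a minimal dependent set is independent, hence of size at most $d=\mathrm{rank}(A)$. For part (3) you correctly identify the one genuinely delicate point: the extreme rays of the sign-constrained cone $C_v$ are the support-minimal elements \emph{of the cone}, whereas a circuit must be support-minimal among \emph{all} kernel vectors. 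Your bridge --- perturbing a support-minimal $w\in C_v$ by $\pm\lambda$ times an arbitrary kernel vector supported strictly inside $\mathrm{supp}(w)$, which is legal because the sign constraints are strict there, and pushing $\lambda$ until a coordinate vanishes --- closes that gap, and Carath\'eodory in the $(n-d)$-dimensional cone gives the bound on the number of summands (with rational coefficients, since the extreme rays and $v$ are integral). The only cosmetic caveat is that the equivalence ``$x^{u^+}-x^{u^-}$ irreducible iff $u$ primitive'' is asserted rather than proved, but it is the standard characterisation and is exactly what the definition of circuit is meant to encode. In short: the proposal supplies a proof the paper omits, along the lines the cited reference would take.
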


The term ``conformal'' in the last Item of Prop.~\ref{prop:circ} means that $\mathrm{supp}(u_h^+) \subset \mathrm{supp}(v^+)$ and  $\mathrm{supp}(u_h^-) \subset \mathrm{supp}(v^-)$.

The first key results follow directly form the fact that a circuit lies in $\ker(A)$.
\begin{lem}
Any non-negative binary circuit of $A=X_1^T$ provides a randomisation vector.
\end{lem}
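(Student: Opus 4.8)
The plan is to unwind the two relevant definitions and observe that the orthogonality requirement is already built into the notion of a circuit, so that nothing beyond a transpose needs to be checked. First I would recall that, by the definition of a circuit, any circuit $u$ of $A$ lies in $\ker(A)$, that is $Au = 0$. Since we have set $A = X_1^T$, this reads $X_1^T u = 0$, and transposing gives the equivalent scalar condition $u^T X_1 = 0$. This is precisely the orthogonality requirement appearing in the definition of a binary randomisation vector, namely $j_h^T X_1 = 0$.

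The second point to verify is that $u$ qualifies as a \emph{binary} randomisation vector, i.e.\ that its entries lie in $\{0,1\}$ so that it can be read as the indicator of a block $R = \mathrm{supp}(u)$. This is exactly the standing hypothesis: we assume $u$ to be non-negative and binary, so $u_i \in \{0,1\}$ for every $i$. Combining the two observations, $u$ is binary and satisfies $u^T X_1 = 0$, which are the defining conditions of a binary randomisation vector; hence $u$ provides a randomisation vector.

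The main thing to note is that there is no genuine obstacle in this lemma: its entire content is the identification $A = X_1^T$, after which membership in $\ker(A)$ immediately yields orthogonality to all columns of $X_1$, while binariness is assumed rather than proved. The substantive questions—whether non-negative binary circuits actually exist for a given $X_1$, and whether a family of them can be assembled into a full decomposition $j = j_1 + \cdots + j_k$ into disjoint exhaustive blocks—are separate matters, addressed in the subsequent development; they are not required here, since this statement concerns only a single circuit.
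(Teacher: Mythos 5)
Your argument is correct and is exactly the one the paper intends: the paper dispatches this lemma with the single remark that it ``follows directly from the fact that a circuit lies in $\ker(A)$,'' which is precisely your observation that $Au=0$ with $A=X_1^T$ transposes to $u^TX_1=0$, while binariness is part of the hypothesis. Your closing remark correctly identifies that the existence of such circuits and their assembly into a full decomposition of $j$ are separate issues handled later in the paper.
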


When a non-negative binary circuit $j_1$ gives a valid randomisation, then also $j_2=j-j_1$ is a binary non-negative vector in $\ker(A)$ so that the decomposition $j=j_1+j_2$ is a valid randomization. Note that $j_2$ may be a circuit itself (and in such a case we call $j=j_1+j_2$ a non decomposable randomisation), or not. In the latter case, the vector $j_2$ can be decomposed into the sum of non-negative circuits.

From Proposition \ref{prop:circ}, Item 3, we see that the circuit basis, and in particular the set of non-negative circuits, is the natural tool to find valid non decomposable randomisations. In general, if the vector $j$ can be written as the sum of binary non-negative circuits we have a valid randomisation. The main problem posed in this paper is to provide conditions for when there is a converse, that is to say classes of experimental designs, for which every randomisation vector $j_h$ is a circuit. In the next section we will describe an important class, here we have a useful sufficient condition.

\begin{lem}\label{2c}
If $j_1$ is a non-negative binary randomisation vector with two non-zero elements ($\#\mathrm{supp}(j_1^+)  = 2$), then it is a circuit of $X_1^T$.
\end{lem}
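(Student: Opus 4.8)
My plan is to unwind the definition of circuit and check its two clauses directly. First I would record that, $j_1$ being non-negative and binary, one has $j_1^- = 0$ and $j_1 = j_1^+ = e_a + e_b$ for the two distinct indices $a \neq b$ in its support, where $e_a, e_b$ are standard basis vectors. Being a randomisation vector means $A j_1 = 0$ with $A = X_1^T$; writing $A_a, A_b$ for the corresponding columns of $A$, this reads $A_a + A_b = 0$, i.e. $A_a = -A_b$. It then remains to verify clause (1), irreducibility of the binomial, and clause (2), support-minimality.

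For clause (1) I would note that, since $j_1^- = 0$, the associated binomial is simply $x^{j_1^+} - x^{j_1^-} = x_a x_b - 1$. I would argue its irreducibility either by the general principle that $x^{u^+} - x^{u^-}$ is irreducible precisely when the nonzero entries of $u$ are setwise coprime (here both entries equal $1$), or, more explicitly, by reading $x_a x_b - 1 = x_b\, x_a - 1$ as a primitive degree-one polynomial in $x_a$ over the unique factorisation domain $\mathbb{Q}[\{x_i : i \neq a\}]$ whose coefficients $x_b$ and $-1$ are coprime, and invoking Gauss's lemma.

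For clause (2) I would show that every nonzero element of $\ker(A)$ supported inside $\{a,b\}$ is a scalar multiple of $j_1$. Writing such a vector as $v = \alpha e_a + \beta e_b$ and using $A_a = -A_b$, one computes $A v = \alpha A_a + \beta A_b = (\alpha - \beta) A_a$; as soon as $A_a \neq 0$ this vanishes only when $\alpha = \beta$, so $v \in \mathbb{Q}\, j_1$ and in particular cannot be supported on $\{a\}$ or $\{b\}$ alone. Hence no (circuit or) kernel vector has support strictly contained in $\mathrm{supp}(j_1) = \{a,b\}$, which is exactly clause (2), and $j_1$ is a circuit of $X_1^T$.

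The only delicate point, and the step I expect to be the genuine obstacle, is the non-vanishing of $A_a$. If the $a$-th column (and hence, by $A_a = -A_b$, the $b$-th column) of $A$ were zero, then $e_a$ would itself lie in $\ker(A)$ and be a circuit with support strictly inside $\{a,b\}$, so $j_1$ would fail clause (2) and the statement would be false. I would close this gap by appealing to the standing non-degeneracy of the design, namely that $X_1$ has no identically-zero row (equivalently $A = X_1^T$ has no zero column); under that hypothesis $A_a = -A_b \neq 0$ and the argument goes through. This is the sole place where anything beyond $\#\mathrm{supp}(j_1^+) = 2$ is needed.
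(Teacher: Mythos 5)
Your proof is correct and takes the only natural route, which is also the paper's: the paper gives no formal proof of Lemma~\ref{2c}, offering only the remark that the two rows of $X_1$ indexed by $\mathrm{supp}(j_1)$ have opposite signs (your identity $A_a=-A_b$), and leaving the verification of the irreducibility of $x_ax_b-1$ and of support-minimality implicit, exactly the two clauses you check. Your one genuine addition --- that the statement fails if those two columns of $A=X_1^T$ vanish, since then $e_a$ is itself a circuit with strictly smaller support --- is a correct observation about a degenerate case the paper tacitly excludes (none of its contrast matrices has a zero row), so the non-degeneracy hypothesis you impose is the right repair rather than a defect of your argument.
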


We can see that for every $j_1$-vector in example covered by Lemma~\ref{2c}, there are two rows of $X_1^T$ which have opposite signs. This is the case in Section~\ref{k:2k} which yields the following result.

\begin{lem}
Any $k$-out-of-2$k$ choice experiment is a valid randomisation with blocks of size 2.
\end{lem}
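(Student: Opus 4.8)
The plan is to pair each selection with its complement and to show that every such pair forms a valid block of size two. Index the rows of $X$ by the $k$-subsets $S\subseteq\{1,\ldots,2k\}$, so that the row indexed by $S$ is the indicator of $S$ in attribute space. Complementation $S\mapsto S^{c}=\{1,\ldots,2k\}\setminus S$ is a fixed-point-free involution on the $k$-subsets, since a $k$-subset and its complement are disjoint and hence distinct; therefore the $\binom{2k}{k}$ rows split into $\tfrac12\binom{2k}{k}$ unordered complementary pairs $\{S,S^{c}\}$. This is exactly the partition $\{16,25,34\}$ exhibited in Section~\ref{k:2k}. For each pair let $z$ be the binary vector in observation space with a $1$ in positions $S$ and $S^{c}$ and $0$ elsewhere. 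What I must show is that $z$ is a randomisation vector, i.e.\ $z^{T}X_{1}=0$, after which Lemma~\ref{2c} applies.

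First I would record the two counting facts that drive the argument. Since each attribute lies in exactly one of $S,S^{c}$, evaluating the row sums gives $z^{T}X=\mathbf{1}^{T}$, the all-ones row in attribute space. Since each attribute lies in exactly $\binom{2k-1}{k-1}$ of the $k$-subsets, we also have $j^{T}X=\binom{2k-1}{k-1}\,\mathbf{1}^{T}$. Setting $\lambda=1/\binom{2k-1}{k-1}$ we obtain $(z-\lambda j)^{T}X=0$, so $z-\lambda j\in\ker(X^{T})$ is orthogonal to the whole column space of $X$, and in particular to the columns of $X_{1}$. Combining this with $j^{T}X_{1}=0$, the defining property of contrast form, yields $z^{T}X_{1}=\lambda\,j^{T}X_{1}+(z-\lambda j)^{T}X_{1}=0$. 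Thus $z$ is a non-negative binary randomisation vector with $\#\mathrm{supp}(z)=2$, and Lemma~\ref{2c} immediately gives that $z$ is a circuit of $X_{1}^{T}$.

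To conclude, I would observe that the complementary pairs form a set partition of $\mathcal N$ into blocks of size $2$, each block being a circuit randomisation vector orthogonal to $X_{1}$; since every row lies in exactly one pair, the corresponding vectors $z$ sum to $j$. Hence they jointly constitute a valid randomisation system in which every block has size $2$, as claimed.

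I expect the only delicate point to be the orthogonality step. It is tempting to argue directly from $z^{T}X=\mathbf{1}^{T}$, but $z$ need not lie in the column space of $X$, so the statement that $z$ meets every column of $X$ in a constant does not by itself force orthogonality to the contrast part $X_{1}$. The clean remedy is the kernel computation above: subtract the correct multiple $\lambda j$ so that $z-\lambda j$ annihilates $X^{T}$, and then use $j^{T}X_{1}=0$ to transfer the conclusion to $X_{1}$. Everything else, namely that the involution yields a genuine partition and that support size two upgrades a randomisation vector to a circuit via Lemma~\ref{2c}, is routine.
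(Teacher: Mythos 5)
Your proof is correct, and it is built around the same two ingredients the paper uses: pair each $k$-subset with its complement to get a partition of the rows into $\tfrac12\binom{2k}{k}$ blocks of size two, and invoke Lemma~\ref{2c} to identify each block vector as a circuit. Where you go beyond the paper is the orthogonality step. The paper disposes of this in one sentence by pointing at the explicit $3\times 6$ matrix $X_1^T$ of Section~\ref{k:2k} and observing that complementary rows of $X_1$ are negatives of one another; it never argues the general $k$ case. Your kernel computation --- $z^TX=\mathbf{1}^T$ and $j^TX=\binom{2k-1}{k-1}\mathbf{1}^T$, hence $z-\lambda j\in\ker(X^T)$ for $\lambda=1/\binom{2k-1}{k-1}$, hence $z^TX_1=0$ since the columns of $X_1$ lie in the column space of $X$ and $j^TX_1=0$ --- proves the claim for every $k$ and for \emph{any} contrast form $X_1$ derived from $X$, not just the particular one displayed in the paper. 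That independence from the choice of $X_1$ is exactly the right thing to establish, since validity of a randomisation depends only on the column space of $X_1$; your closing remark correctly identifies why the naive ``$z$ meets every column of $X$ in a constant'' argument is insufficient on its own. The one point worth making explicit is the justification that $\mathrm{col}(X_1)\subseteq\mathrm{col}(X)$: this follows from the paper's construction of the contrast form (the lemma in Section~\ref{sect:writing} produces $\tilde X=[j:X_1]$ with the same column space as $X$), and you should cite that rather than leave it implicit.
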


This shows that if we have a valid randomisation comprising binary vectors each with two non-zero binary vectors then it will be found by inspecting the list of all circuits.

\subsection{Computation of circuit}\label{subs:comp}

To find the randomisation systems from the circuit basis, we start from the design matrix $X$, we write it in contrast form $\tilde{X}$, and we extract the contrast matrix $X_1$ as described above. The actual computation of the circuits of the matrix $X_1$ can be done with the software package {\tt 4ti2}, see \cite{4ti2}. In {\tt 4ti2} there is a function called {\tt circuits} which computes the circuits of an integer matrix. The algorithms to compute circuits in {\tt 4ti2} belong to the class of combinatorial algorithms, and thus there is a limitation on the size of the matrices for which the computation of the circuit is actually feasible. In our experiments, problems with a set of points up to 50 are easily processed, but the execution time increases fast with the number of points. However, all the contrast matrices illustrated in this paper have been processed by {\tt 4ti2} in less than 0.1 seconds. {\tt 4ti2} is now available also within the symbolic software {\tt Macaulay2}, see \cite{macaulay2}, and there are {\tt R} packages available which allow the communication between {\tt R} and {\tt Macaulay2}, leading to a flexible use of the symbolic computations into statistical analysis.

\begin{ex}
Using the function {\tt circuits} for the contrast matrix of the $3$-out-of-$6$ problem, we obtain three circuits as expected
\[
\left[
\begin{array}{rrrrrr}
 0 & 0 & 1 & 0 & 0 & 1 \\
 0 & 1 & 0 & 1 & 0 & 0 \\
 1 & 0 & 0 & 0 & 1 & 0 \\
\end{array}
\right].
\]
\end{ex}

\begin{ex}
Computing the circuits for the $2^3$ design with contrasts on the main effects, we obtain the circuits described in the previous sections. The {\tt 4ti2} output consists of $20$ circuits, $6$ of which are non-negative:
\[
\left[
\begin{array}{rrrrrrrr}
0 & 0 & 0 & 1 & 1 & 0 & 0 & 0 \\
0 & 0 & 1 & 0 & 0 & 1 & 0 & 0 \\
0 & 1 & 0 & 0 & 0 & 0 & 1 & 0 \\
0 & 1 & 1 & 0 & 1 & 0 & 0 & 1 \\
1 & 0 & 0 & 0 & 0 & 0 & 0 & 1 \\
1 & 0 & 0 & 1 & 0 & 1 & 1 & 0 \\
\end{array}
\right].
\]
This yields the two randomisation schemes $$\{\{1,4,6,7\},\{2,3,5,8\}\}\;\; \{ \{1,8\},\{2,7\},\{3,6\},\{4,5\}\}$$ already discussed. Here, there is only one valid randomisation based on 2-ers and only one valid randomsation based on 4-ers. (The term $n$-er is a colloquial term for an entity of size $n$.
\end{ex}

With the aid of the circuits we are able to analyse also more complex models where the number of randomisation systems is relatively large.

\begin{ex} \label{ex:5rand}
In the case of $2^4$ design with contrasts on the main effects, the contrast matrix is:
\[X_1^T= \]
\[
\left[\begin{array}{rrrrrrrrrrrrrrrr}
1 & 1 & 1 & 1 & 1 & 1 & 1 & 1 & -1 & -1 & -1 & -1 & -1 & -1 & -1 & -1 \\
1 & 1 & 1 & 1 & -1 & -1 & -1 & -1 & 1 & 1 & 1 & 1 & -1 & -1 & -1 & -1 \\
1 & 1 & -1 & -1 & 1 & 1 & -1 & -1 & 1 & 1 & -1 & -1 & 1 & 1 & -1 & -1 \\
1 & -1 & 1 & -1 & 1 & -1 & 1 & -1 & 1 & -1 & 1 & -1 & 1 & -1 & 1 & -1 \\
\end{array}
\right],
\]
and the situation becomes  more complex. Although $0.02$ seconds are enough to obtain the whole set of $456$ circuits, the non-negative circuits are now $48$ but there are also non-binary circuits with entries equal to $2$.
Selecting the binary circuits reduces to $32$ circuits: $8$ circuits with support on two points give a unique randomisation based on 2-ers, with the remaining $24$ circuits on 4 points we can construct 30 valid randomisations. Each circuit on 4 points is used in 5 possible randomisations. For instance with the circuit
\[
c= \left[\begin{array}{rrrrrrrrrrrrrrrr}
0 &  0  & 0  & 0 & 0 & 1 & 1 & 0 & 1 & 0 & 0 &  1& 0  & 0 & 0  & 0 \\
\end{array}\right]
\]
one can define 5 randomisations, reported in Figure \ref{fig:5rand}.
\end{ex}

\begin{figure}
\begin{center}
\[
\begin{array}{rrrrrrrrrrrrrrrr}
0 & 0 & 0 & 0 & 0 & 1 & 1 & 0 & 1 &  0 &  0 &  1 &  0 &  0 &  0 &  0 \\
0 & 0 & 0 & 0 & 1 & 0 & 0 & 1 & 0 &  1 &  1 &  0 &  0 &  0 &  0 &  0 \\
0 & 1 & 1 & 0 & 0 & 0 & 0 & 0 & 0 &  0 &  0 &  0 &  1 &  0 &  0 &  1 \\
1 & 0 & 0 & 1 & 0 & 0 & 0 & 0 & 0 &  0 &  0 &  0 &  0 &  1 &  1 &  0 \\ \hline
0 & 0 & 0 & 0 & 0 & 1 & 1 & 0 & 1 &  0 &  0 &  1 &  0 &  0 &  0 &  0 \\
0 & 0 & 0 & 1 & 1 & 0 & 0 & 0 & 0 &  0 &  1 &  0 &  0 &  1 &  0 &  0 \\
0 & 1 & 1 & 0 & 0 & 0 & 0 & 0 & 0 &  0 &  0 &  0 &  1 &  0 &  0 &  1 \\
1 & 0 & 0 & 0 & 0 & 0 & 0 & 1 & 0 &  1 &  0 &  0 &  0 &  0 &  1 &  0 \\ \hline
0 & 0 & 0 & 0 & 0 & 1 & 1 & 0 & 1 &  0 &  0 &  1 &  0 &  0 &  0 &  0 \\
0 & 0 & 0 & 1 & 1 & 0 & 0 & 0 & 0 &  1 &  0 &  0 &  0 &  0 &  1 &  0 \\
0 & 1 & 1 & 0 & 0 & 0 & 0 & 0 & 0 &  0 &  0 &  0 &  1 &  0 &  0 &  1 \\
1 & 0 & 0 & 0 & 0 & 0 & 0 & 1 & 0 &  0 &  1 &  0 &  0 &  1 &  0 &  0 \\ \hline
0 & 0 & 0 & 0 & 0 & 1 & 1 & 0 & 1 &  0 &  0 &  1 &  0 &  0 &  0 &  0 \\
0 & 0 & 1 & 0 & 0 & 0 & 0 & 1 & 0 &  1 &  0 &  0 &  1 &  0 &  0 &  0 \\
0 & 1 & 0 & 0 & 1 & 0 & 0 & 0 & 0 &  0 &  1 &  0 &  0 &  0 &  0 &  1 \\
1 & 0 & 0 & 1 & 0 & 0 & 0 & 0 & 0 &  0 &  0 &  0 &  0 &  1 &  1 &  0 \\ \hline
0 & 0 & 0 & 0 & 0 & 1 & 1 & 0 & 1 &  0 &  0 &  1 &  0 &  0 &  0 &  0 \\
0 & 0 & 1 & 0 & 1 & 0 & 0 & 0 & 0 &  1 &  0 &  0 &  0 &  0 &  0 &  1 \\
0 & 1 & 0 & 0 & 0 & 0 & 0 & 1 & 0 &  0 &  1 &  0 &  1 &  0 &  0 &  0 \\
1 & 0 & 0 & 1 & 0 & 0 & 0 & 0 & 0 &  0 &  0 &  0 &  0 &  1 &  1 &  0 \\
\end{array}
\]
\end{center}
\caption{The 5 randomisations for the $2^4$ example containing the circuit $c=(0, 0 ,0 ,0 ,0 ,1 ,1, 0, 1 ,0 ,0 ,1, 0, 0, 0, 0)$.} \label{fig:5rand}
\end{figure}

With a large choice of randomization schemes the problem arises as to which to choose. This is discussed briefly in Section \ref{sect:disc}.

\section{Totally unimodular $X_1$} \label{sect:tum}

Although the factorial design and Latin square examples can be considered well-known, because of orthogonality properties of both, example in Section \ref{k:2k} may be less so. So we may ask what is the property of $X_1^T$ for which the full valid randomisation system can be found as a set of circuits.

\begin{defn}
A \emph{totally unimodular matrix} $A$ is one for which all square submatrices (including itself if square) have determinant $0$, $1$, or $-1$.
\end{defn}

\begin{thm}
Let $A=X_1^T$ be the design/model matrix of  regression model in contrast form and suppose $A$ is totally unimodular. Then every valid randomisation is based on circuits.
\end{thm}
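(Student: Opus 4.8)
The plan is to show that under total unimodularity each randomisation vector $j_h$ decomposes as a sum of non-negative circuits with pairwise disjoint supports; in particular an indecomposable block (one that is not the disjoint union of two smaller randomisation vectors) must be a single circuit, so the whole valid randomisation is read off from ${\mathcal C}(X_1^T)$. I would first record the structural consequence of total unimodularity that drives everything: every circuit of $A$ is $\{0,\pm 1\}$-valued. Indeed, if $u$ is a circuit with support $S$, the submatrix $B$ of $A$ on the columns of $S$ has rank $|S|-1$ and one-dimensional kernel; by Cramer's rule the primitive integer generator of this kernel has entries equal, up to sign, to the maximal minors of a full-rank row-submatrix of $B$, and total unimodularity forces each such minor into $\{0,\pm 1\}$. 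Hence a non-negative circuit is in fact a $0/1$ vector.

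Next I would take a randomisation vector $j_h$, which is a non-negative $0/1$ vector in $\ker(A)$, and apply Item~3 of Proposition~\ref{prop:circ}: $j_h = \sum_l q_l u_l$ with $q_l \in \mathbb{Q}_+$ and the $u_l$ circuits conformal with $j_h$. Because $j_h \ge 0$ we have $\mathrm{supp}(u_l^-)\subseteq \mathrm{supp}(j_h^-)=\emptyset$, so every $u_l$ is non-negative, and by the previous paragraph each $u_l$ is a $0/1$ vector with $\mathrm{supp}(u_l)\subseteq \mathrm{supp}(j_h)$. If $j_h$ is not itself a circuit, then no $u_l$ can have $\mathrm{supp}(u_l)=\mathrm{supp}(j_h)$: such a $u_l$, being a $0/1$ vector with the same support as the $0/1$ vector $j_h$, would equal $j_h$, making $j_h$ a circuit. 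Thus some $u:=u_l$ is a non-negative $0/1$ circuit with $\mathrm{supp}(u)\subsetneq\mathrm{supp}(j_h)$.

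With such a $u$ in hand the decomposition is immediate: since $u$ and $j_h$ are both $0/1$ and $\mathrm{supp}(u)\subseteq \mathrm{supp}(j_h)$, we have $u \le j_h$ entrywise, so $j_h-u$ is again a non-negative $0/1$ vector in $\ker(A)$, with support $\mathrm{supp}(j_h)\setminus\mathrm{supp}(u)$ disjoint from that of $u$ and strictly smaller than $\mathrm{supp}(j_h)$. Writing $j_h=u+(j_h-u)$ and inducting on the size of the support peels $j_h$ into non-negative circuits with pairwise disjoint supports. Carrying this out for each block of the partition exhibits the valid randomisation as a refinement built from circuits, and shows that every indecomposable block is a circuit.

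The step I expect to be the real obstacle is guaranteeing that the sub-object strictly inside $\mathrm{supp}(j_h)$ can be chosen non-negative. For a general integer matrix the support-minimal kernel vector sitting inside $j_h$ may have entries of both signs, or of modulus greater than $1$, and then it cannot be subtracted from $j_h$ while keeping the result a $0/1$ vector; this is precisely why the naive converse fails without extra hypotheses. Total unimodularity removes the obstruction, and it is the combination of conformality (which supplies non-negativity of the $u_l$) with the $\{0,\pm1\}$ minor bound (which upgrades a non-negative circuit to a $0/1$ vector) that forms the heart of the argument.
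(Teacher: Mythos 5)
Your argument is correct, and its overall skeleton coincides with the paper's: both reduce the theorem to the key fact that circuits of a totally unimodular matrix are $\{0,\pm1\}$-valued, and both then use the conformal decomposition of Proposition~\ref{prop:circ}, Item 3, to peel a non-negative binary circuit with strictly smaller support off a non-decomposable randomisation vector (the paper phrases this as a contradiction, you as an induction that additionally exhibits the full disjoint-support decomposition). Where you genuinely diverge is in the proof of the key lemma. The paper's Lemma~\ref{TU} goes through commutative algebra: for $A$ totally unimodular the initial ideal of the toric ideal $I(A)$ is square-free for every term order, so every circuit binomial $x^u-1$ with $u\ge 0$, being in the universal Gr\"obner basis, has square-free leading term, forcing $u$ to be binary. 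You instead argue by elementary linear algebra: the one-dimensional kernel of the column submatrix supported on a circuit is spanned, via Cramer's rule, by the vector of signed maximal minors of a full-row-rank submatrix, and total unimodularity puts those minors in $\{0,\pm1\}$, so the primitive kernel generator --- the circuit itself --- is $\{0,\pm1\}$-valued. Your route is more self-contained (no Gr\"obner machinery) and in fact establishes the statement for \emph{all} circuits, whereas the paper's proof as written only treats the non-negative ones (which is all that is needed); the paper's route has the advantage of sitting inside the toric-ideal framework it invokes elsewhere and of resting on a directly quotable theorem from \cite{sturmfels}. One small point common to both write-ups: to conclude that the refinement $j_h = u + (j_h - u)$ is again a potential randomisation system one needs $\mathrm{supp}(j_h-u)$ to have size at least $2$; this holds because $A$ has no zero column, so no nonzero kernel vector has singleton support --- the paper flags this ("there are no circuits with support on one point") and you could add the same remark.
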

The proof is in two parts. First we need
\begin{lem}\label{TU}
For a totally unimodular matrix $A$ all circuit vectors are binary.
\end{lem}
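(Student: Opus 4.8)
The plan is to exploit the matroid-theoretic meaning of the minimal-support condition together with a Cramer-type determinantal formula for the kernel of a matrix of corank one, and then invoke total unimodularity to force the relevant minors into $\{-1,0,1\}$.

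First I would fix a circuit $u \in \ker(A)$ and write $S = \mathrm{supp}(u)$. Let $A_S$ denote the $d \times |S|$ submatrix of $A$ formed by the columns indexed by $S$, and let $u_S$ be the restriction of $u$ to those coordinates, so that $A_S u_S = 0$ with every entry of $u_S$ nonzero. The minimal-support property says that no proper subset of the columns of $A_S$ supports a nonzero element of $\ker(A)$; translated, the columns of $A_S$ are linearly dependent, but any $|S|-1$ of them are independent. Hence $A_S$ has rank $|S|-1$ and $\ker(A_S)$ is one-dimensional, spanned by $u_S$.

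Next I would pass to a square witness of this rank. Choosing $|S|-1$ rows of $A_S$ along which the rank is attained yields a $(|S|-1)\times|S|$ matrix $B$ of full row rank $|S|-1$; since $\ker(B) \supseteq \ker(A_S)$ and both are one-dimensional, $\ker(B)=\ker(A_S)$ is still spanned by $u_S$. For a full-row-rank matrix with exactly one more column than rows, the kernel is spanned by the vector of signed maximal minors: the vector $v$ with $v_\ell = (-1)^\ell \det(B_{\hat\ell})$, where $B_{\hat\ell}$ is $B$ with its $\ell$-th column deleted. Each $\det(B_{\hat\ell})$ is a $(|S|-1)\times(|S|-1)$ minor of $A$, so total unimodularity forces $v_\ell \in \{-1,0,1\}$. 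Because $u_S$ is a scalar multiple of $v$ and has no zero entries, no $v_\ell$ can vanish, so every $v_\ell = \pm1$.

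Finally I would promote this to a statement about $u$ itself. The irreducibility of $x^{u^+}-x^{u^-}$ forces $u$ to be primitive (a common factor $k>1$ of its entries would factor the binomial as a difference of $k$-th powers), so $u_S$ equals $\pm v$ rather than a larger multiple; hence every entry of $u_S$ lies in $\{-1,1\}$ and every entry of $u$ lies in $\{-1,0,1\}$, i.e. $u$ is binary. The step I expect to need the most care is the determinantal identity for the kernel together with the bookkeeping that the minors appearing in it are genuinely minors of the original $A$ (so that total unimodularity applies) and are all nonzero (so that $\mathrm{supp}(v)$ is exactly $S$); the remainder is routine linear algebra.
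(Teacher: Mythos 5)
Your proof is correct, but it takes a genuinely different route from the paper. The paper argues through commutative algebra: it cites the theorem (from Sturmfels) that for a unimodular configuration every initial ideal of the toric ideal $I(A)$ is generated by square-free monomials, observes that circuits belong to the Universal Gr\"obner basis, and concludes that their leading terms --- and hence the exponent vectors --- are square-free; as written, that argument is really only spelled out for the non-negative circuits $x^u-1$. You instead give a self-contained linear-algebra proof: minimal support forces the column submatrix $A_S$ to have corank one, the kernel of a full-row-rank $(|S|-1)\times|S|$ matrix is spanned by the vector of signed maximal minors, total unimodularity puts those minors in $\{-1,0,1\}$, non-vanishing of the entries of $u_S$ rules out $0$, and primitivity (which you correctly extract from the irreducibility of $x^{u^+}-x^{u^-}$, since a common factor $k>1$ would make the binomial a difference of $k$-th powers) pins $u_S=\pm v$. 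Your version has the advantages of being elementary, of covering all circuits rather than just the non-negative ones, and of making transparent exactly where total unimodularity enters (as the $(|S|-1)\times(|S|-1)$ minors of $A$); the paper's version has the advantage of sitting inside the Gr\"obner-basis framework it uses elsewhere, at the cost of invoking a nontrivial black-box theorem. The one step you flagged as needing care --- that each $\det(B_{\hat\ell})$ is genuinely a minor of $A$ and that none of them vanish --- does go through: $B_{\hat\ell}$ is a square submatrix of $A$ on the chosen rows and the columns $S\setminus\{\ell\}$, and since $v$ is a nonzero scalar multiple of $u_S$, which has no zero entries on $S$, every $v_\ell$ is $\pm 1$.
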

\begin{proof}
This is based on some known results from the theory of Gr\"obner bases.
Thus,  for $A$ unimodular, the circuits and the Universal Gr\"obner basis are equal. In this statement, the circuits should be seen as represented by the so-called binomials, that is if $u = u^+ - u^-$ are formed by ``dummy'' variables $z_i$ exponents given by $u$:
$$x^{u^+} - x^{u^-}.$$
These binomials generate a toric ideal $I(A)$. This ideal is very widely studied, for example in algebraic statistics it is the starting point for Markov Chain Monte Carlo simulation for testing hypotheses on multinomial contingency tables, see \cite{diaconissturmfels}.

Now, if $A$ is totally unimodular then it is known that the initial ideal $\mathrm{in}(I(A))$ is generated by square-free binomials for any given term-order (required to define a Gr\"obner basis), see \cite{sturmfels}. The initial ideal $\mathrm{in}(I(A))$ of the ideal $I(A)$ is the ideal generated by the leading terms of the polynomials in $I(A)$. Thus, all the binomials in the Universal Gr\"obner basis ${\mathcal U}(I(A))$ have  square-free leading terms.

Finally, the non negative circuits are elements of ${\mathcal U}(I(A))$, viewed as binomials of the form $x^u - 1$. The leading term is always $x^u$, it is square-free and therefore $u$ is binary.
\end{proof}

We now complete the proof with the following.
\begin{lem}
If the contrast matrix $A=X_1^T$ in a regression model is totally unimodular then every non decomposable randomisation vector $j$ is a circuit.
\end{lem}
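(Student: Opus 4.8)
The plan is to combine the conformal decomposition of kernel vectors into circuits (Proposition~\ref{prop:circ}, Item~3) with the fact that total unimodularity forces every circuit to be binary (Lemma~\ref{TU}). Write $w$ for a non decomposable randomisation vector, i.e. a non-zero binary vector $w \in \ker(A)$ that cannot be split as $w = w_1 + w_2$ with $w_1, w_2$ non-zero binary vectors of $\ker(A)$. The goal is to show that such a $w$ must itself be a circuit.

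First I would apply Proposition~\ref{prop:circ}, Item~3, to $w \in \ker(A)$, obtaining a conformal representation $w = \sum_h q_h u_h$ with $q_h \in \mathbb{Q}_+$ and each $u_h$ a circuit conformal with $w$. The next, and crucial, observation is that since $w \ge 0$ its negative part is empty, so conformality (which requires $\mathrm{supp}(u_h^-) \subseteq \mathrm{supp}(w^-)$) forces every $u_h$ to be non-negative. Total unimodularity now enters through Lemma~\ref{TU}: each circuit $u_h$ is binary. Because both $u_h$ and $w$ take values in $\{0,1\}$ and $\mathrm{supp}(u_h) \subseteq \mathrm{supp}(w)$, we get $u_h \le w$ componentwise, whence $w - u_h$ is again a binary vector lying in $\ker(A)$.

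To finish I would pick any index $h_0$ with $q_{h_0} > 0$; such an index exists because $w \ne 0$, and the corresponding circuit $u_{h_0}$ is non-zero. This yields the decomposition $w = u_{h_0} + (w - u_{h_0})$ into two binary vectors of $\ker(A)$. Since $w$ is non decomposable it admits no split into two non-zero such vectors, so necessarily $w - u_{h_0} = 0$, i.e. $w = u_{h_0}$ is a circuit, as claimed. Iterating the same peeling-off step on a general randomisation block (which stays binary and in $\ker(A)$ at each stage) expresses it as a partition into binary circuits, which is exactly what is needed to close the proof of the Theorem.

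I expect the only delicate point to be the interplay that keeps the whole argument inside the binary world: conformality is what removes the negative parts of the circuits in the decomposition, and total unimodularity, via Lemma~\ref{TU}, is precisely what guarantees the resulting non-negative circuits have entries in $\{0,1\}$ rather than larger integers. Without this, $w - u_{h_0}$ need not be binary and the peeling-off step, and hence the appeal to non decomposability, would break down; so I would state explicitly where total unimodularity is invoked.
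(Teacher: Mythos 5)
Your proof is correct and follows essentially the same route as the paper: conformal decomposition of the kernel vector into circuits (Proposition~\ref{prop:circ}, Item~3), binarity of those circuits from total unimodularity (Lemma~\ref{TU}), and then peeling off one circuit to contradict (or, in your direct phrasing, exhaust) non-decomposability. If anything, you are slightly more careful than the paper in spelling out why conformality forces the circuits in the decomposition to be non-negative, but the argument is the same.
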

\begin{proof}
This is by contradiction. Let $j_1$ be a (non-negative binary) non decomposable randomisation vector and suppose it is not a circuit. Since $j_1 \in \ker(A)$, by Prop.~\ref{prop:circ}, Item 3, $j_1$has a representation as a non-negative linear combination of circuits $u_1+ \ldots +u_k$. Take one of such circuits $u_h$. Its support is strictly contained in $\mathrm{supp}(j_1)$ and note that $\#\mathrm{supp}(j_1)-\#\mathrm{supp}(u_h)>1$, because $j_1$ is not a circuit and there are no circuits with support on one point. Moreover, the circuit $u_h$ is binary by Lemma \ref{TU}. So there is a refinement given by $j_1 = u_h + (j_1-u_h)$, which contradicts $j_1$ being non decomposable.
\end{proof}

The most well known example of a totally unimodular matrix is generated by a directed graph $G(V,E)$. The rows are indexed by vertices and the columns by directed edges with the following rule for entries if the edge is $ e= (i \rightarrow j)$ then entries $A_{i,e} = 1,A_{j,e} = -1$ and all other entries in column $e$ are zero. For $A$ to be an $X_1$ matrix we need it to be (row) orthogonal to $j = (1,1, \ldots, 1)$ this requires that for any vertex the number of in-arrows and the number of out-arrows must be the same.

\begin{ex}\label{ex:5points}
 Let $|V| = 5, |E| = 15$ and the directed edges (leaving out commas):
\[
12,13,14,23,24,25,34,35,31,45,41,42,51,52,53 .
\]
In this example $A=X_1^T$ is
\vspace{3mm}
\[  \left[
\begin{array}{rrrrrrrrrrrrrrr}
 1 & 1 & 1& 0& 0 & 0 & 0 & 0 & -1 & 0 & -1 & 0 & -1 & 0 & 0 \\
-1 & 0 & 0& 1& 1 & 1 & 0  & 0 & 0 & 0 & 0 & -1& 0 & -1 & 0 \\
0 & -1 & 0& -1& 0 & 0 & 1 & 1 & 1 & 0 & 0 & 0 & 0 & 0 & -1 \\
0 & 0 & -1& 0& -1 & 0 & -1 & 0 & 0 & 1 & 1 & 1 & 0 & 0 & 0 \\
0 & 0 & 0& 0 &   0 & -1 & 0 & -1 & 0 & -1 & 0 & 0 & 1 & 1 & 1 \\
\end{array}
\right]
\]

\vspace{3mm}

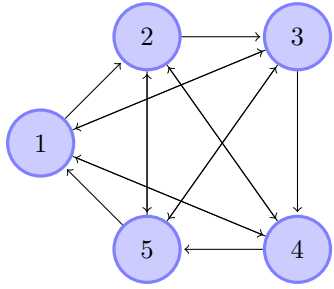
\begin{figure}
\begin{center}
\begin{tikzpicture}[shorten >=1pt,node distance=2cm,auto]
\tikzstyle{every state}=[draw=blue!50,very thick,fill=blue!20]
\node[state] (q_1) {$1$};
\node[state] (q_2) [above right of=q_1] {$2$};
\node[state] (q_5) [below right of=q_1] {$5$};
\node[state] (q_3) [right of=q_2] {$3$};
\node[state] (q_4) [right of=q_5] {$4$};
\path[->] (q_1) edge node {} (q_2);
\path[->] (q_1) edge node {} (q_3);
\path[->] (q_1) edge node {} (q_4);
\path[->] (q_2) edge node {} (q_3);
\path[->] (q_2) edge node {} (q_4);
\path[->] (q_2) edge node {} (q_5);
\path[->] (q_3) edge node {} (q_4);
\path[->] (q_3) edge node {} (q_5);
\path[->] (q_3) edge node {} (q_1);
\path[->] (q_4) edge node {} (q_5);
\path[->] (q_4) edge node {} (q_1);
\path[->] (q_4) edge node {} (q_2);
\path[->] (q_5) edge node {} (q_1);
\path[->] (q_5) edge node {} (q_2);
\path[->] (q_5) edge node {} (q_3);
\end{tikzpicture}
\end{center}
\caption{The directed graph on 5 points in Example \ref{ex:5points}.}  \label{fig:graph2}
\end{figure}

The graph for this example is pictured in Figure \ref{fig:graph2}. For the $X_1$ matrix above, there are 33 nonnegative circuits from a total of 198 circuits: 5 2-ers, 10 3-ers, 10 4-ers, and 8 5-ers.  The valid randomisations we obtained from those circuits are reported in the following table giving the cardinality of the subsets and number $r$ of different choices, classified by the corresponding integer partition.

\vspace{2mm}
\begin{center}
\begin{tabular}{|c|c|}
\hline
randomisation & $r$ \\ \hline
 5+5+5 & 1  \\
5+5+3+2 & 5 \\
5+3+3+2+2 & 5 \\
5+2+2+2+2+2 & 1 \\
4+4+3+2+2 & 10 \\
4+3+2+2+2+2 & 5 \\
3+3+3+2+2+2 & 5 \\ \hline
\end{tabular}
\end{center}
\vspace{2mm}

By the properties of the circuits we know that no proper subset is possible in the previous randomisation, so for instance we know that no randomisation of the form $5+5+3+2$ can share two $5$-ers with the randomisation $5+5+5$. However, the $5+5+5$ shares a $5$-ers with the randomisation $5+2+2+2+2+2$, as shown in Figure \ref{fig:graph}.

\begin{figure}
\begin{center}
\begin{tikzpicture}[scale=0.5]
\filldraw [black] (2,6) circle (4pt)
(4,6) circle (4pt)
(6,6) circle (4pt)
(8,6) circle (4pt)
(10,6) circle (4pt)
(14,5) circle (4pt)
(14,7) circle (4pt)
(16,4) circle (4pt)
(16,8) circle (4pt)
(18,3) circle (4pt)
(18,9) circle (4pt)
(20,2) circle (4pt)
(20,10) circle (4pt)
(22,1) circle (4pt)
(22,11) circle (4pt);
\draw[very thick,blue,rounded corners=3] (1,5.5) rectangle (11,6.5);
\draw[very thick,rotate around={27:(13,6)},blue,rounded corners=3] (13,6) rectangle (24,7);
\draw[very thick,rotate around={-27:(13,6)},blue,rounded corners=3] (13,6) rectangle (24,5);
\draw[red,thick,style=dashed] (6,6) ellipse (5.5 and 1.0);
\draw[red,thick,style=dashed] (14,6) ellipse (0.7 and 2.5);
\draw[red,thick,style=dashed] (16,6) ellipse (0.7 and 3.5);
\draw[red,thick,style=dashed] (18,6) ellipse (0.7 and 4.5);
\draw[red,thick,style=dashed] (20,6) ellipse (0.7 and 5.5);
\draw[red,thick,style=dashed] (22,6) ellipse (0.7 and 6.5);
\end{tikzpicture}
\end{center}
\caption{Two randomisations for the directed graph on 5 points in Figure \ref{fig:graph2}: a $5+5+5$ randomisation and a $5+2+2+2+2+2$ randomisation sharing a 5-er.} \label{fig:graph}
\end{figure}
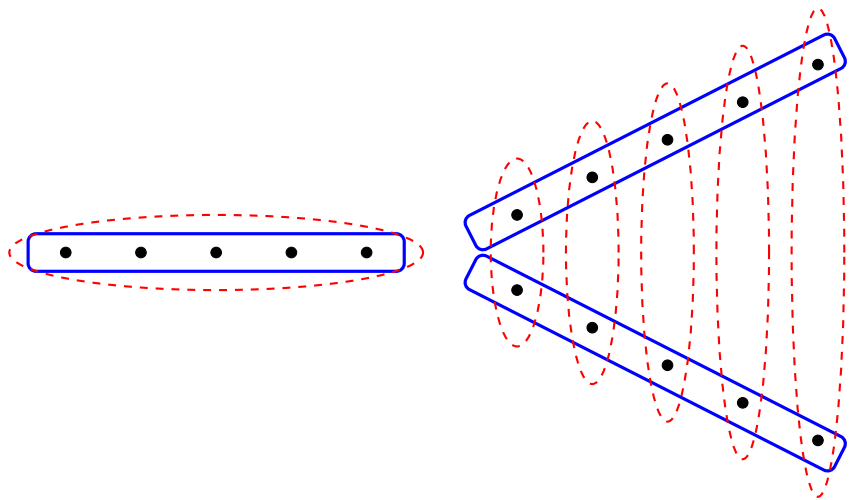

\end{ex}

\section{Discussion} \label{sect:disc}

We can ask a skeptical general question: given the wealth of combinatorial theory to find orthogonal blocks what benefit does the circuit method have? An immediate answer is that it provides, in appropriate cases, the choice of a large, even very large, variety of valid randomisations schemes and under special conditions {\em all} valid randomisations. Weighing designs give some intuition. Historically there are two types. A chemical balance experiment has two pans and compares set of objects. Weighing a set of objects on a one pan weighing machine is very similar to the choice experiments. In the chemical balance the observation itself is an empirical contrast, whereas in the single pan case, we have to reparametrised creating $X_1$ to obtain contrasts, as in the AB experiment. Informally, we could say that in some cases the contrast matrix $X_1$ represents a two-pan experiment embedded in a one pan experiments.

Valid randomisations form a lattice under refinement which we suggest is natural  generalisation of nested randomization. A single non decomposable $j$-vector is a minimal element. A non decomposable valid randomisation corresponds to partition of ${\mathcal N} = \{1,2, \ldots, n\}$. There may be more than one non decomposable valid scheme, as we saw in the $2^3$ example and in the last example. Also relevant is randomisation cost. It may be that a cost function which is related to the structure of the randomization and which is order preserving with respect to the refinement lattice could lead to useful strategies in cases where, as we have seen, the choice of valid randomisations is very large. That is, we have in the background the idea that more refined randomisation is cheaper. There is something of a computational challenge. As we arrive in the ``big data'' era we can expect more sources of bias and mote actual bias. If randomisation is to meet this challenge then we need to extend the theory and the technology of randomisation including fast computation.

There is a considerable literature on sequential randomisation with a model, in the AB case, that subjects (e.g. patients) are awarded treatments A or B on the equivalent of a toss of a fair coin (there is a considerable work on biased coin design which we do not cover). This is an example where the method in this paper should be a cheaper procedure administratively than randomising over a fixed population in order to conduct a more complex randomised block experiment. Note that in the $2^2$ experiment of Example 1 with two blocks of size 2, each block only supplies some of the information. The same for the 4 blocks of size 2 in the $ 2^3$ experiment, whereas for the two $\frac{1}{2}$ fractions of size 4 the parameters can be estimated from each block. In the 2-out-of-4 choice experiments we compare similarly attributes $(1,2)$ v. $(3,4)$, $(1,3)$ v. $(2,4)$ and $(1,4)$ v. $(2,3)$. The two-pan metaphor is useful. The extension to the $k$-out-of-$2k$ example is straightforward and the blocks arise from all ways of splitting $2k$ objects into disjoint set of size $k$. It is likely in our view that sequential and adaptive randomisation will be increasingly important and costs are traded with effectiveness. Their impressive use in CoViD-19 vaccination trial (e.g.~\cite{thor,knoll}) is likely to have a lasting impact.

Finally, some mathematical remarks. The paper could have been written concentrating the link to matroid theory,  because the term {\em circuit} is a term from matroid theory and the circuits presented here form a linear circuit. One matroid  property, for example, is the fact that if the given circuit as defined here it has minimal support in that no vector with whose support is a subset can be a circuit, but should recall that out of the full set of circuits we  select those that have non negative entries. Another mathematical feature which may be useful is that each block of randomisation scheme defined here has an associated permutation group and the full randomisation scheme  generates a subgroup of the full permutation group $S_n$. All possible schemes for a particular example may lead to a complex lattice of subgroups under set partition refinement. The relation between matroids and permutations  group had been studied in \cite{cam}.


\begin{thebibliography}{plain}

\bibitem{bailey} Bailey, R. A.,\& Rowley, C. A. (1987). Valid randomization. Proceedings of the Royal Society of London. A. Mathematical and Physical Sciences, 410(1838), 105-124.

\bibitem{box} Box, G. E., Hunter, W. H., Hunter, S. (1978). Statistics for experimenters (Vol. 664). New York: John Wiley and sons.

\bibitem{cam} Cameron, P. J.,\& Fon-Der-Flaass, D. G. (1995). Bases for permutation groups and matroids. European Journal of Combinatorics, 16(6), 537-544.

\bibitem{cox} Cox, D. R. (2009). Randomization in the design of experiments. International Statistical Review, 77(3), 415-429.

\bibitem{dean} Dean, A., Morris, M., Stufken, J., \& Bingham, D. (Eds.). (2015). Handbook of design and analysis of experiments (Vol. 7). CRC Press.

\bibitem{diaconissturmfels} Diaconis P. and Sturmfels B., Algebraic algorithms for sampling from conditional distributions. Ann. Statist. 26(1), 363-397.

\bibitem{drov} Drovandi, C. C., Holmes, C., McGree, J. M., Mengersen, K., Richardson, S.,\& Ryan, E. G. (2017). Principles of experimental design for big data analysis. Statistical science: a review journal of the Institute
    of Mathematical Statistics, 32(3), 385.

\bibitem{guz} Guzowski, L., Tatara, E.,\& Milostan, C. (2014). Scoping Study Using Randomized Controlled Trials to Optimize Small Buildings' and Small Portfolios'(SBSP) Energy Efficiency Programs (No. ANL/DIS-14/8). Argonne
    National Lab.(ANL), Argonne, IL (United States).

\bibitem{kemp} Kempthorne, O. (1955). The randomization theory of experimental inference. Journal of the American Statistical Association, 50(271), 946-967.

\bibitem{koha} Kohavi, R.,\& Longbotham, R. (2017). Online Controlled Experiments and A/B Testing. Encyclopedia of machine learning and data mining, 7(8), 922-929.

\bibitem{knoll} Knoll, M. D.,\& Wonodi, C. (2021). Oxford–AstraZeneca COVID-19 vaccine efficacy. The Lancet, 397(10269), 72-74.

\bibitem{pesce1} Pesce, E., Riccomagno, E.,\& Wynn, H. P. (2017, September). Experimental design issues in big data: The question of bias. In Scientific Meeting of the Classification and Data Analysis Group of the Italian
    Statistical Society (pp. 193-201). Springer, Cham.

\bibitem{pesce2} Pesce, E.,\& Riccomagno, E. (2018). Large Datasets, Bias and Model Oriented Optimal Design of Experiments. arXiv preprint arXiv:1811.12682.

\bibitem{scott} Scott, A. J.,\& Smith, T. M. F. (1975). Minimax designs for sample surveys. Biometrika, 62(2), 353-357.

\bibitem{steng} Stenger, H. (1979). A minimax approach to randomization and estimation in survey sampling. The annals of statistics, 7(2), 395-399.

\bibitem{stig} Stigler, S. M. (1969). The use of random allocation for the control of selection bias. Biometrika, 56(3), 553-560.

\bibitem{sturmfels} Sturmfels, B. (1996). Grobner bases and convex polytopes (Vol. 8). American Mathematical Soc..

\bibitem{thor} Thorlund, K., Dron, L., Park, J., Hsu, G., Forrest, J. I.,\& Mills, E. J. (2020). A real-time dashboard of clinical trials for COVID-19. The Lancet Digital Health, 2(6), e286-e287.

\bibitem{wynn} Wynn, H. P. (1977). Minimax purposive survey sampling design. Journal of the American Statistical Association, 72(359), 655-657.

\bibitem{4ti2} 4ti2 team, 4ti2---A software package for algebraic, geometric and combinatorial problems on linear spaces, available at https://4ti2.github.io.

\bibitem{macaulay2} Grayson, Daniel R. and Stillman, Michael E., Macaulay2, a software system for research in algebraic geometry, available at http://www.math.uiuc.edu/Macaulay2/.

\end{thebibliography}
\end{document}